\theoremstyle{plain}
\newtheorem{theorem}{Theorem}
\newtheorem{lemma}{Lemma}
\theoremstyle{remark}
\theoremstyle{definition}
\newtheorem{definition}{Definition}
\begin{document}
\title[Generalized normal forms of infinitesimal symplectic and contact 
transformations]{Generalized normal forms of infinitesimal symplectic and 
contact transformations in the neighbourhood of a singular point}
\author{A S Vaganyan$^{1,2}$}
\address{Department of Differential Equations, Mathematics and Mechanics 
Faculty, Saint-Petersburg State University, 198504, Universitetsky pr., 28, 
Stary Peterhof, Russia}
\address{Chebyshev Laboratory, 199178, 14th Line 29B, Vasilyevsky Island, 
Saint-Petersburg , Russia}
\ead{armay@yandex.ru}

\begin{abstract}
Definition of generalized normal form for a system of ODEs corresponding to an 
infinitesimal symplectic or contact transformation near a singular point, with 
an arbitrary polynomial unperturbed part, and a method of its finding are 
introduced. Applicability of the introduced method to studying the critical 
phenomena in non-ideal media is shown. As examples, generalized normal forms 
for the equations of state of a mixture of non-ideal gases and non-ideal 
multicomponent plasma are considered within the framework of perturbation 
theory. In particular, it is shown that the lowest order perturbation effects 
in the Debye-H\"uckel hydrogen plasmas are classified by only three constant 
parameters.
\end{abstract}
\ams{34C20, 35F20, 35Q79, 53D22}
\maketitle

\section{Introduction}

In the introduction, some necessary facts from symplectic and contact geometry 
are given. For a complete overview see \cite[Ch~1, \S\S~6,\,7]{kobayasi} and 
references therein.

\bigskip
A \textit{symplectic form} on a manifold $M$ of even 
dimension $s=2\,n$ is a closed 2-form $\omega$ of maximal rank, i.~e. 
$\rmd\omega=0$ and $\omega^n\ne0$ everywhere on $M.$ There exists a local 
coordinate system $z_1,\,\ldots,\,z_{2n}$ for which 
$\omega=\sum_{i=1}^n \rmd z_{n+i}\wedge\rmd z_i$ (see \cite[p~154]{kobayasi}).
Such coordinates are called \textit{canonical coordinates}. 

A transformation $f$ (or an infinitesimal transformation $X,$ respectively) of 
$M$ is said to be \textit{symplectic} if $f^*\,\omega=\omega$ 
$(\mathrm L_X\,\omega=0),$ where $f^*$ denotes the linear mapping transposed 
to $\rmd f,$ and $\mathrm L_X$ is the Lie derivative along the vector field 
$X.$ 

In canonical coordinates, an infinitesimal symplectic transformation is given 
by
\begin{equation}\label{Eq01}
X_H=\sum_{i=1}^n 
\frac{\partial H}{\partial z_{n+i}}\,\frac{\partial}{\partial z_i}-
\frac{\partial H}{\partial z_i}\,\frac{\partial}{\partial z_{n+i}},
\end{equation}
where $H$ is an arbitrary differentiable function. $H$ is called  
\textit{the Hamiltonian}, and infinitesimal transformation \eref{Eq01} is 
called a \textit{Hamiltonian vector field}. The corresponding system of ODEs 
is called a \textit{Hamiltonian system}.

Defining the \textit{Poisson bracket} of two functions $\Phi$ and $\Psi$ by  
$X_{\{\Phi,\,\Psi\}}=[X_\Phi,\,X_\Psi],$ in canonical coordinates we have the 
classical formula
\begin{equation*}
\{\Phi,\,\Psi\}=\sum_{i=1}^n 
\frac{\partial\Phi}{\partial z_i}\,\frac{\partial\Psi}{\partial z_{n+i}}-
\frac{\partial\Phi}{\partial z_{n+i}}\,\frac{\partial\Psi}{\partial z_i}.
\end{equation*}

\bigskip 
A \textit{contact form} on a manifold $M$ of odd
dimension $s=2\,n+1$ is an open cover $\{U_\alpha\}$ of $M$ together with a 
system of 1-forms $\omega_\alpha$ of maximal rank, i.~e. 
$\omega_\alpha\wedge(\rmd\omega_\alpha)^n\ne0$ everywhere on $U_\alpha,$ such 
that $\omega_\alpha=\varphi_{\alpha\beta}\,\omega_\beta$ on 
$U_\alpha\cap U_\beta$ for some functions $\varphi_{\alpha\beta}\ne0.$  

For any point $p\in U_\alpha,$ there exists a coordinate neighbourhood 
$U_p\subset U_\alpha$ with coordinates $z_1,\,\ldots,\,z_{2n+1}$ in which 
$\omega_\alpha=\rmd z_{2n+1}-\sum_{i=1}^n z_{n+i}\,\rmd z_i$ (see 
\cite[p~150]{kobayasi}). Just as in the symplectic case, we call such 
coordinates \textit{canonical}. 

A transformation $f$ of $M$ is said to be \textit{contact} if for all 
$\alpha,\,\beta,$ $f^*\,\omega_\alpha=\Lambda_{\alpha\beta}\,\omega_\beta,$ 
where $\Lambda_{\alpha\beta}\ne0$ everywhere on 
$f^{-1}(U_\alpha)\cap U_\beta.$ Accordingly, an infinitesimal transformation 
$X$ is called \textit{contact} if 
$\mathrm L_X\,\omega_\alpha=\lambda_\alpha\,\omega_\alpha$ for some functions 
$\lambda_\alpha.$

In canonical coordinates, an infinitesimal contact transformation is given by
\begin{eqnarray}\fl
X_H=\sum_{i=1}^n\Big[-\frac{\partial H}{\partial z_{n+i}}\,
\frac{\partial}{\partial z_i}+\Big(\frac{\partial H}{\partial z_i}+z_{n+i}\,
\frac{\partial H}{\partial z_{2n+1}}\Big)\,
\frac{\partial}{\partial z_{n+i}}\Big]+\nonumber\\+
\Big(H-\sum_{j=1}^n z_{n+j}\,\frac{\partial H}{\partial z_{n+j}}\Big)\,
\frac{\partial}{\partial z_{2n+1}},\label{Eq02}
\end{eqnarray}
where $H$ is an arbitrary differentiable function. The function $H$ is 
sometimes called a contact Hamiltonian, or simply \textit{the Hamiltonian}. 

In the contact case, the Poisson bracket in canonical coordinates has the view
\begin{eqnarray}\fl
\{\Phi,\,\Psi\}=\Phi\,\frac{\partial\Psi}{\partial z_{2n+1}}-
\Psi\,\frac{\partial\Phi}{\partial z_{2n+1}}+\nonumber\\+\sum_{i=1}^n
\Big(\frac{\partial\Phi}{\partial z_i}+
z_{n+i}\,\frac{\partial\Phi}{\partial z_{2n+1}}\Big)\,
\frac{\partial\Psi}{\partial z_{n+i}}-
\Big(\frac{\partial\Psi}{\partial z_i}+
z_{n+i}\,\frac{\partial\Psi}{\partial z_{2n+1}}\Big)\,
\frac{\partial\Phi}{\partial z_{n+i}}.\label{Eq03}
\end{eqnarray}

\bigskip
From now on, only functions that are analytic in some neighbourhood $U$ of the 
origin in $\mathbb R^s$ are considered. We denote the algebra of analytic 
functions in $U$ by $\mathbb R_U(z),$ and the algebra of polynomial functions 
in $z_1,\ldots,\,z_s$ by $\mathbb R[z].$ 

Diffeomorphisms of $\mathbb R^s$ onto itself of the form $\exp(X_P)$ with
$P\in\mathbb R[z]$ generate the subgroup $\mathfrak A_s$ in the group of
symplectic (if $s=2\,n),$ or contact (if $s=2\,n+1),$ transformations of 
$\mathbb R^s.$ The algebra of infinitesimal symplectic, or contact, 
transformations is closed under the action of this subgroup, and the 
Hamiltonian functions are transformed according to the law 
\begin{equation*}
H\,\rightarrow\,\exp(\widehat P)(H),
\end{equation*}
where $\widehat P(\,\cdot\,)=\{P,\,\cdot\,\}.$

Let the origin be a \textit{singular point} of the vector field $X_H,$ i.~e. 
$X_H|_{z=0}=0,$ and let the corresponding Hamiltonian have the view
\begin{equation}\label{Eq04}
H=H_0+H_1\qquad\big(H_0\in\mathbb R[z],\ H_1\in\mathbb R_U(z)\big).
\end{equation}
We call $H_0$ the \textit{unperturbed part}, and $H_1$ the 
\textit{perturbation} of the Hamiltonian $H.$

In the first four sections of the present article, the general problem of 
bringing the perturbation of $H$ into the simplest form (\textit{normal form}) 
by transformations from the group $\mathfrak A_s$ that do not change the 
unperturbed part of $H$ is being solved. Such problems often arise in studying 
Hamiltonian and nearly Hamiltonian systems. As a result, there are many 
different ad hoc methods and definitions of normal forms (see, for instance, 
\cite{bruno, meyer, belitsky}) that require the unperturbed Hamiltonian to 
satisfy certain conditions such as quadraticity, homogeneity, etc. But such 
requirements can be excessive in some physical problems. The generalized 
normal form introduced in this article is free of such restrictions. It is 
worth noting that the contact case was earlier considered only in the 
simplest, generic, situation. However, it turns out that real physical 
problems, for instance thermodynamics of plasmas, often refer to the 
non-generic cases. In section~5 we introduce some applications of the method 
of generalized normal forms to thermodynamics of non-ideal media.

\section{Basic definitions}

As usual, the sets of positive and nonnegative integers will be denoted by 
$\mathbb N$ and $\mathbb Z_+$ respectively.

Let $n\in\mathbb N$ and $s=2\,n,$ or $2\,n+1,$ be fixed. 

\begin{definition} 
A vector $\gamma=(\gamma_1,\ldots,\,\gamma_s)\in\mathbb N^s$ is called a 
\textit{weight} of the variable $z=(z_1,\ldots,\,z_s)$ if 
$\mathrm{GCD}\,(\gamma_1,\ldots,\,\gamma_s)=1.$ If, moreover, for some natural
$\sigma\ge2,$ $\gamma_1+\gamma_{n+1}=\ldots=\gamma_n+\gamma_{2n}=\sigma,$ and 
in the case of odd $s,$ $\gamma_{2n+1}=\sigma,$ we say that $\gamma$ is 
a \textit{canonical weight}.
\end{definition}

\begin{definition} 
Given a power series (or a polynomial) 
$P=\sum_{\nu\in\mathbb Z_+^s} a_\nu\,z^\nu,$ the \textit{generalized order} 
(the \textit{generalized degree}) of $P$ with weight $\gamma$ is the least 
(respectively, the greatest) of numbers 
$\nu\cdot\gamma=\nu_1\,\gamma_1+\ldots+\nu_s\,\gamma_s$ such that $a_\nu\ne0$ 
if $P\not\equiv0,$ and $\infty$ (or $0,$ respectively) otherwise.
\end{definition}

\begin{definition} 
A polynomial $P\in\mathbb R[z]$ is said to be \textit{quasi-homogeneous} with 
weight $\gamma$ if either its generalized order and generalized degree with 
weight $\gamma$ are equal or $P$ is identically zero.
\end{definition}

\begin{definition} 
Given an arbitrary power series $P=\sum_{\nu\in\mathbb Z_+^s} a_\nu\,z^\nu,$ 
define its $[k]$-jet with weight $\gamma$ by 
$\mathrm{J}_\gamma^{[k]}(P)=\sum_{\nu\cdot\gamma=0}^k a_\nu\,z^\nu.$
\end{definition}

Denote the generalized order of $P$ with weight $\gamma$ by 
$\mathrm{ord}_\gamma\,P,$ the generalized degree by $\deg_\gamma P,$ and the 
vector space of all $[k]$-jets with weight $\gamma$ by 
$\mathfrak J_\gamma^{[k]}.$

Consider a power series $H(z)=H_0(z)+H_1(z)$ with the unperturbed part 
$H_0\in\mathbb R[z]$ and the perturbation $H_1\in\mathbb R_U(z)$ such 
that for some fixed canonical weight $\gamma,$
\begin{equation}\label{Eq05}
\mathrm{ord}_\gamma\,H_1>\deg_\gamma H_0\ge\mathrm{ord}_\gamma\,H_0. 
\end{equation}

For brevity, denote $\delta=\mathrm{ord}_\gamma\,H_1-
\mathrm{ord}_\gamma\,H_0$ and $\Delta=\mathrm{ord}_\gamma\,H_1.$
 
Define an inner product on polynomials by
\begin{equation}\label{Eq06}
\langle\!\langle P,\,Q\rangle\!\rangle=P(\partial)\,Q(z)|_{z=0}\qquad
\big(P,\,Q\in\mathbb R[z],\ 
\partial=(\partial/\partial z_1,\ldots,\,\partial/\partial z_s)\big),
\end{equation}
and fix some linear order $\succ$ on the set of monomials in $z.$

For any integer $d\ge\Delta,$ define a vector space
$\mathfrak N_d\subset\mathfrak J_\gamma^{[d+\delta-1]}$ as follows:
\begin{equation}\label{Eq07}
\mathfrak N_d=\Big\{\mathrm{J}_\gamma^{[d+\delta-1]}\big(\{F,\,H_0\}\big):\ 
F\in\mathbb R[z],\ \mathrm{ord}_\gamma\,F\ge d+\delta-\Delta+\sigma\Big\},
\end{equation} 
and denote the set of leading monomials of the elements of $\mathfrak N_d$ 
with respect to $\succ$ by $\mathrm{LM}\,(\mathfrak N_d).$

\begin{lemma} 
For any integer $d\ge\Delta,$ there exists a basis $\mathfrak G_d=\{G_i\}$ of 
$\mathfrak N_d$ such that  $\langle\!\langle G_i,\,
\mathrm{LM}\,(G_j)\rangle\!\rangle=\delta_{ij},$ where $\delta_{ij}$ denotes 
the Kronecker delta. Herewith, the sets $\mathrm{LM}\,(\mathfrak G_d)$ and 
$\mathrm{LM}\,(\mathfrak N_d)$ are the same.
\end{lemma}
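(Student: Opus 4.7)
The inner product $\langle\!\langle\cdot,\cdot\rangle\!\rangle$ makes the monomials an orthogonal family, since a direct computation gives $\langle\!\langle z^\nu,z^\mu\rangle\!\rangle=\nu!\,\delta_{\nu\mu}$. Writing $\mu_j$ for the leading exponent of $G_j$, the requirement $\langle\!\langle G_i,\mathrm{LM}(G_j)\rangle\!\rangle=\delta_{ij}$ therefore amounts to demanding that the coefficient of $z^{\mu_j}$ in $G_i$ equal $\delta_{ij}/\mu_j!$. The assertion is thus that $\mathfrak N_d$ admits a basis in reduced echelon form with respect to the monomial order $\succ$. My plan is to construct such a basis by Gauss--Jordan elimination, using only that $\mathfrak N_d$ is a finite-dimensional subspace of the jet space $\mathfrak J_\gamma^{[d+\delta-1]}$.

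First I enumerate $\mathrm{LM}(\mathfrak N_d)=\{\mu_1\succ\mu_2\succ\cdots\succ\mu_r\}$, a finite set because $\mathfrak N_d$ is finite-dimensional, and for each $i$ I pick some $P_i\in\mathfrak N_d$ with $\mathrm{LM}(P_i)=\mu_i$. Elements with distinct leading monomials are automatically linearly independent, and a standard descent argument (repeatedly killing the leading term of the remainder using the appropriate $P_i$, a process that terminates because every monomial in sight lies in a finite-dimensional jet space) shows that $\{P_i\}$ spans $\mathfrak N_d$. In particular $r=\dim\mathfrak N_d$.

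Next I process $i=r,r-1,\ldots,1$ in turn. At step $i$, I rescale $P_i$ so that the coefficient of $z^{\mu_i}$ becomes $1/\mu_i!$, and then for each $j>i$ I subtract $\mu_j!\,c_{ij}\,G_j$, where $c_{ij}$ is the current coefficient of $z^{\mu_j}$ in the rescaled $P_i$; I call the resulting polynomial $G_i$. Because each already-finalised $G_j$ (for $j>i$) has been arranged to vanish at every pivot $\mu_k$ with $k>j$, these corrections do not reintroduce pivots that have already been cleared, and $\mathrm{LM}(G_i)$ stays equal to $\mu_i$ since every correction lives in monomials $\prec\mu_i$. By construction the coefficient of $z^{\mu_j}$ in $G_i$ is zero for $j>i$, and for $j<i$ it is zero automatically because $\mu_j\succ\mu_i=\mathrm{LM}(G_i)$; this gives the stated orthonormality. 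The change of basis from $\{P_i\}$ to $\{G_i\}$ is unipotent triangular in the ordering $\mu_1\succ\cdots\succ\mu_r$, so $\{G_i\}$ is again a basis, and $\mathrm{LM}(\mathfrak G_d)=\{\mu_1,\ldots,\mu_r\}=\mathrm{LM}(\mathfrak N_d)$.

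The argument is essentially classical Gauss--Jordan reduction adapted to a polynomial space equipped with a monomial order, so I do not anticipate any deep obstacle. The one subtle point is the order of the reductions: clearing pivots from the smallest upward is what prevents a later correction from spoiling an already-established zero, and the inductive observation that each finalised $G_j$ vanishes at every pivot strictly below $\mu_j$ is what makes the scheme self-consistent.
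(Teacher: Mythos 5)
Your proof is correct and follows essentially the same route as the paper's: both are Gauss--Jordan eliminations with respect to the pairing $P\mapsto\langle\!\langle P,\,\mathrm{LM}\,(G_j)\rangle\!\rangle,$ relying on the facts that $\langle\!\langle z^\nu,\,z^\mu\rangle\!\rangle=\mu!\,\delta_{\nu\mu}$ and that subtracting an element with a strictly smaller leading monomial does not disturb the leading monomial. The only organisational difference is that you first build a staircase basis indexed by $\mathrm{LM}\,(\mathfrak N_d)$ (which makes the second claim of the lemma immediate) and then back-substitute from the smallest pivot upward, whereas the paper orthogonalises an arbitrary basis by induction on its length and deduces $\mathrm{LM}\,(\mathfrak G_d)=\mathrm{LM}\,(\mathfrak N_d)$ afterwards from the distinctness of the leading monomials together with finite-dimensionality.
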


\begin{proof}
The proof is by induction. Let $\{B_i\}$ be an arbitrary basis for 
$\mathfrak N_d.$ Setting $G_1=B_1/\langle\!\langle B_1,\,
\mathrm{LM}\,(B_1)\rangle\!\rangle$ gives
$\mathrm{LM}\,(G_1)=\mathrm{LM}\,(B_1)$ and
$\langle\!\langle G_1,\,\mathrm{LM}\,(G_1)\rangle\!\rangle=1.$

Suppose that for some $k\in\mathbb N$ and for all $i,\,j\le k,$ we have
$\langle\!\langle B_i,\,\mathrm{LM}\,(B_j)\rangle\!\rangle=\delta_{ij}.$  
Denote $G'_{k+1}=B_{k+1}-\sum_{i=1}^k 
\langle\!\langle B_{k+1},\,\mathrm{LM}\,(B_i)\rangle\!\rangle B_i,$ so that 
$\langle\!\langle G'_{k+1},\,\mathrm{LM}\,(B_i)\rangle\!\rangle=0.$ Then 
setting $G_{k+1}=G'_{k+1}/\langle\!\langle G'_{k+1},\,
\mathrm{LM}\,(G'_{k+1})\rangle\!\rangle$ and $G_i=B_i-\langle\!\langle B_i,\,
\mathrm{LM}\,(G_{k+1})\rangle\!\rangle\,G_{k+1}$ for all $i\le k$ gives 
$\mathrm{LM}\,(G_i)=\mathrm{LM}\,(B_i),$ and hence $\langle\!\langle G_i,\,
\mathrm{LM}\,(G_j)\rangle\!\rangle=\delta_{ij}$ for all $i,\,j\le k+1,$ which 
completes the induction and proves the first statement of the lemma.

Since $\mathfrak N_d$ is finite-dimensional, the second statement follows at 
once from the fact that all of the elements of $\mathfrak G_d$ have different 
leading monomials.
\end{proof}

\begin{definition}\label{def5} 
Define a \textit{minimal resonant set of index $d$} as a complement of 
$\mathrm{LM}\,(\mathfrak N_d)$ in the set of monomials of generalized degree 
$k,$ where $d\le k<d+\delta.$    	
\end{definition}

Denote the vector space spanned by a minimal resonant set of index $d$ by 
$\mathfrak R_d.$ Then given a basis $\mathfrak G_d$ for $\mathfrak N_d$ as in 
Lemma~1, the formula 
\begin{equation}\label{Eq08}
\pi_d(P)=P-\sum_i\langle\!\langle P,\,\mathrm{LM}\,(G_i)\rangle\!\rangle\,G_i
\end{equation}
defines a projection of the vector space of $[d+\delta-1]$-jets of  
generalized order greater than or equal to $d$ onto $\mathfrak R_d.$ 

\section{Normal form theorem}

\begin{theorem} 
For any integer $N\ge\Delta,$ there exists a transformation 
from $\mathfrak A_s$ that brings $H$ into the form
\begin{equation}\label{Eq09}
\widetilde H=H_0+\sum_{k=0}^{\big[(N-\Delta)/\delta\big]}R_k+S,
\end{equation} 
where $R_k\in\mathfrak R_{\Delta+k\delta}$ and $\mathrm{ord}_\gamma\,S>N.$ 
\end{theorem}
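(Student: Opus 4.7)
My plan is to proceed by induction on $k$ from $0$ to $k_{\max}:=\big[(N-\Delta)/\delta\big]$, producing the required transformation as a composition of $k_{\max}+1$ time-$1$ flows $\exp(X_{F_k})$ generated by polynomials $F_k\in\mathbb R[z]$ of sufficiently high generalized order. At stage $k$ the inductive hypothesis will be that some element of $\mathfrak A_s$ has already brought $H$ into the form
\[H_0+R_0+\cdots+R_{k-1}+\widetilde H_1^{(k)},\qquad R_j\in\mathfrak R_{\Delta+j\delta},\qquad \mathrm{ord}_\gamma\,\widetilde H_1^{(k)}\ge\Delta+k\delta,\]
which at $k=0$ reduces, via \eref{Eq05}, to the given $H=H_0+H_1$. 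The workhorse estimate underlying everything is that, in both the symplectic \eref{Eq01} and the contact \eref{Eq02}--\eref{Eq03} cases, the Poisson bracket satisfies $\mathrm{ord}_\gamma\,\{F,G\}\ge\mathrm{ord}_\gamma\,F+\mathrm{ord}_\gamma\,G-\sigma$; this is immediate from the bracket formulas together with the identities $\gamma_i+\gamma_{n+i}=\gamma_{2n+1}=\sigma$ built into a canonical weight.

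For the inductive step, truncate $\widetilde H_1^{(k)}$ to the jet $J_k:=\mathrm J_\gamma^{[\Delta+(k+1)\delta-1]}\big(\widetilde H_1^{(k)}\big)$, which has generalized order $\ge\Delta+k\delta$, and apply the projection $\pi_{\Delta+k\delta}$ from \eref{Eq08} to split $J_k=R_k+M_k$ with $R_k\in\mathfrak R_{\Delta+k\delta}$ and $M_k\in\mathfrak N_{\Delta+k\delta}$. By the defining formula \eref{Eq07} there exists $F_k\in\mathbb R[z]$ with $\mathrm{ord}_\gamma\,F_k\ge(k+1)\delta+\sigma$ such that $M_k=\mathrm J_\gamma^{[\Delta+(k+1)\delta-1]}\big(\{F_k,H_0\}\big)$. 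Applying $\exp(X_{-F_k})\in\mathfrak A_s$ transforms the Hamiltonian according to the Lie series $H\mapsto H-\{F_k,H\}+\frac12\{F_k,\{F_k,H\}\}-\cdots$. The linear correction $-\{F_k,H_0\}$ equals $-M_k$ modulo terms of generalized order $\ge\Delta+(k+1)\delta$, which by construction cancels the piece $M_k$ hidden inside $\widetilde H_1^{(k)}$ and leaves $R_k$ exposed. Every remaining contribution — the first-order brackets $\{F_k,R_j\}$ and $\{F_k,\widetilde H_1^{(k)}\}$, and every iterated bracket of depth $\ge 2$ — has generalized order $\ge\Delta+(k+1)\delta$ by the bracket estimate and the lower bound on $\mathrm{ord}_\gamma\,F_k$, so they are absorbed into the new remainder $\widetilde H_1^{(k+1)}$.

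After $k_{\max}+1$ such steps the composition of the $\exp(X_{-F_k})$'s still belongs to $\mathfrak A_s$, and the Hamiltonian has the form \eref{Eq09} with $\mathrm{ord}_\gamma\,S\ge\Delta+(k_{\max}+1)\delta>N$, as required. The most delicate point I anticipate is the uniform order bookkeeping in the contact case: the extra summands in \eref{Eq03} that involve $\partial/\partial z_{2n+1}$ and the factor $z_{n+i}$ must be verified to respect the same $-\sigma$ drop, which works precisely because $\gamma_{2n+1}=\sigma$ and the operator $z_{n+i}\,\partial/\partial z_{2n+1}$ shifts generalized weight by $\gamma_{n+i}-\sigma=-\gamma_i$, matching the plain $\partial/\partial z_i$ contribution. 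A subsidiary but necessary remark is that $\mathrm{ord}_\gamma\,F_k\ge\delta+\sigma>\sigma$ forces $X_{F_k}$ to vanish at the origin, so each $\exp(X_{F_k})$ is a well-defined local diffeomorphism fixing $0$ and the finite composition genuinely lies in $\mathfrak A_s$.
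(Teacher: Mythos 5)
Your proposal is correct and follows essentially the same route as the paper's proof: induction on the normalization step, truncation to the $[\Delta+(k+1)\delta-1]$-jet, projection via $\pi_{\Delta+k\delta}$ onto $\mathfrak R_{\Delta+k\delta}$, extraction of the generating polynomial $F_k$ of generalized order at least $(k+1)\delta+\sigma$ from the definition of $\mathfrak N_{\Delta+k\delta}$, and absorption of all higher brackets into the remainder using the estimate $\mathrm{ord}_\gamma\{F,G\}\ge\mathrm{ord}_\gamma F+\mathrm{ord}_\gamma G-\sigma$. The only differences are cosmetic (your sign convention on $F_k$ and your explicit verification of the bracket order estimate in the contact case, which the paper asserts without detail).
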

\begin{proof}
Let $F\in\mathbb R[z]$ and $\mathrm{ord}_\gamma\,F\ge\delta+\sigma.$ Then the 
transformation $\exp(X_F)\in\mathfrak A_s$ brings $H$ into the form
$$\exp(\widehat F)(H)=H_0+H_1+\{F,H_0\}+\ldots,$$
where $\mathrm{ord}_\gamma\,\big(\{F,\,H_0\}\big)\ge\Delta,$ and the dots 
denote terms of generalized order greater than or equal to $\Delta+\delta.$

More generally, if $\mathrm{ord}_\gamma\,F\ge k\,\delta+\sigma$ for some 
$k\in\mathbb N,$ then in the same formula 
$\mathrm{ord}_\gamma\,\big(\{F,\,H_0\}\big)\ge\Delta+(k-1)\,\delta,$ and the 
generalized order of terms denoted by dots is greater than or equal to  
$\Delta+k\,\delta.$

Suppose that $H=H_0+\sum_{k=0}^{m-1}R_k+S_m,$ where 
$R_k\in\mathfrak R_{\Delta+k\delta},$ 
$\mathrm{ord}_\gamma\,S_m\ge\Delta+m\,\delta,$ for some $m\in\mathbb Z_+.$ The 
proof is by induction on $m$ with $S_0=H_1$ for $m=0.$ 

Write $S_m$ as $S_m=P_m+Q_m,$ where
$P_m=\mathrm{J}_\gamma^{[\Delta+(m+1)\,\delta-1]}(S_m).$ Since 
$P_m-\pi_{\Delta+m\delta}(P_m)\in\mathfrak N_{\Delta+m\delta},$ where 
$\pi_{\Delta+m\delta}$ and $\mathfrak N_{\Delta+m\delta}$ are as defined in 
\eref{Eq07}--\eref{Eq08}, there exists a polynomial $F_m$ of generalized order 
greater than or equal to $(m+1)\,\delta+\sigma$ such that 
$P_m+\mathrm{J}_\gamma^{[\Delta+(m+1)\,\delta-1]}(\{F_m,H_0\})=
\pi_{\Delta+m\delta}(P_m).$ Denote $R_m=\pi_{\Delta+m\delta}(P_m).$ Then the 
transformation $\exp(X_{F_m})\in\mathfrak A_s$ brings $H$ into the form 
$H_0+\sum_{k=0}^{m}R_k+S_{m+1},$ where
$\mathrm{ord}_\gamma\,S_{m+1}\ge\Delta+(m+1)\,\delta.$ 

By induction, $m$ can be made arbitrarily large, and in particular, equal to 
$[(N-\Delta)/\delta],$ so that $\mathrm{ord}_\gamma\,S_{m+1}\ge\Delta+
\big([(N-\Delta)/\delta]+1\big)\,\delta>N.$ 
Since at each step the normalizing transformation has the form 
$\exp(X_{F_m})\in\mathfrak A_s,$ and the number of steps is finite, 
the composition of these transformations lies in $\mathfrak A_s.$ 
\end{proof}

\begin{definition} 
We say a Hamiltonian $H$ with the unperturbed part $H_0,$ as well as the 
corresponding vector field, to be in generalized normal form up to generalized 
degree $N$ if it has the form \eref{Eq09}.
\end{definition}

\section{The case of quasi-homogeneous unperturbed part}

In case where the unperturbed Hamiltonian is quasi-homogeneous for some 
canonical weight, there is an alternative way of describing generalized normal 
forms in terms of what we call the ''resonant sets''. This method requires no 
linear order on monomials and is stated as follows.

Denote the vector space of quasi-homogeneous polynomials in $z$ of generalized 
degree $k\in\mathbb Z_+$ with weight $\gamma$ over $\mathbb R$ by 
$\mathbb R_\gamma^{[k]}.$

Let $H_0=H_\gamma^{[\chi]}\in\mathbb R_\gamma^{[\chi]},$ where $\gamma$ is a 
canonical weight, and $\chi\ge\sigma.$ Then for each $k\in\mathbb Z_+,$ the 
linear operator $\widehat H_\gamma^{[\chi]}=\{H_\gamma^{[\chi]},\,\cdot\,\}$ 
maps $\mathbb R_\gamma^{[k]}$ to $\mathbb R_\gamma^{[k+\chi-\sigma]},$ and its 
conjugate with respect to the inner product \eref{Eq06} has the form
\begin{equation*}
\left.\widehat{H}_\gamma^{[\chi]}\right.^*=
\sum_{i=1}^n
z_{n+i}\,\Big(\frac{\partial H_\gamma^{[\chi]}}{\partial z_i}\Big)^*-
z_i\,\Big(\frac{\partial H_\gamma^{[\chi]}}{\partial z_{n+i}}\Big)^*,
\end{equation*}
in the symplectic case, and
\begin{eqnarray*}\fl
\left.\widehat{H}_\gamma^{[\chi]}\right.^*=
z_{2n+1}\,\left.H_\gamma^{[\chi]}\right.^*-
\Big(\frac{\partial H_\gamma^{[\chi]}}{\partial z_{2n+1}}\Big)^*+\\+
\sum_{i=1}^n z_{n+i}\,\Big(\frac{\partial H_\gamma^{[\chi]}}{\partial z_i}+
z_{n+i}\,\frac{\partial H_\gamma^{[\chi]}}{\partial z_{2n+1}}\Big)^*-
\Big(z_i+z_{2n+1}\,\frac{\partial}{\partial z_{n+i}}\Big)\,
\Big(\frac{\partial H_\gamma^{[\chi]}}{\partial z_{n+i}}\Big)^*
\end{eqnarray*}
in the contact case. Here $P^*=P(\partial)$ for each $P\in\mathbb R[z].$

\begin{definition}
We call the equation
\begin{equation}\label{Eq10}
\left.\widehat{H}_\gamma^{[\chi]}\right.^*(P)=0
\end{equation} 
for $P\in\mathbb R[z]$ the \textit{resonance equation}, and its solutions  
\textit{resonant polynomials}.  
\end{definition} 

Denote the vector space of resonant polynomials by $\mathfrak R_\gamma.$

By properties of the inner product \eref{Eq06}, it follows that 
$\mathfrak R_\gamma$ splits into direct sum of the orthogonal subspaces
$\mathfrak R_\gamma^{[k]}$ spanned by quasi-homogeneous resonant polynomials 
of generalized degree $k\in\mathbb Z_+.$

\begin{definition}\label{def6}
We call a set of quasi-homogeneous polynomials $\mathfrak S_\gamma^{[m]}=
\{S_j\}_{j=1}^{s_m}$ a resonant set of generalized degree $m$ with weight 
$\gamma$ if for some basis $\{R_i\}_{i=1}^{s_m}$ of 
$\mathfrak{R}_\gamma^{[m]},$ $\det(\{\langle\!\langle R_i,\,
S_j\rangle\!\rangle\}_{i,j=1}^{s_m})\ne0.$ Given a resonant set 
$\mathfrak S_\gamma^{[m]}$ for each generalized degree $m\in\mathbb Z_+,$ we 
call their union, $\mathfrak S_\gamma=\bigcup_{m=0}^\infty
\mathfrak S_\gamma^{[m]},$ a \textit{resonant set}. If, moreover, 
$\mathfrak S_\gamma$ consists only of monomials, we say that it is a 
\textit{minimal resonant set}.
\end{definition}

Clearly, the definition of a resonant set does not depend on the choice of 
basis for $\mathfrak{R}_\gamma^{[k]}.$ Note that in the case of a 
quasi-homogeneous unperturbed part, a minimal resonant set in the sense of 
Definition~\ref{def5} is also a minimal resonant set in the sense of 
Definition~\ref{def6}, and vice versa.

\begin{theorem} 
For each integer $N\ge\Delta$ and arbitrarily chosen by $H_\gamma^{[\chi]}$ 
resonant set $\mathfrak S_\gamma,$ there exists a transformation from 
$\mathfrak A_s$ that brings $H$ to the form
\begin{equation*}
\widetilde H=H_0+\sum_{k=\Delta}^{N} R_\gamma^{[k]}+S,
\end{equation*}
where $R_\gamma^{[k]}\in\mathrm{span}(\mathfrak S_\gamma^{[k]})$ and 
$\mathrm{ord}_\gamma\,S>N.$ 
\end{theorem}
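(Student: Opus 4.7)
The plan is to adapt the induction used to prove Theorem~1 to the quasi-homogeneous setting, exploiting the fact that the operator $\widehat H_\gamma^{[\chi]}=\{H_\gamma^{[\chi]},\,\cdot\,\}$ is now graded, sending $\mathbb R_\gamma^{[k-\chi+\sigma]}$ into $\mathbb R_\gamma^{[k]}$. This will allow us to eliminate one generalized degree at a time, rather than in blocks of width $\delta$.

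The algebraic heart of the argument is the direct sum decomposition
$$\mathbb R_\gamma^{[k]}=\mathrm{span}\,(\mathfrak S_\gamma^{[k]})\oplus\widehat H_\gamma^{[\chi]}\big(\mathbb R_\gamma^{[k-\chi+\sigma]}\big),\qquad k\ge\Delta.$$
The general identity $\mathrm{image}(A)=(\ker A^*)^\perp$, applied with $A=\widehat H_\gamma^{[\chi]}$ regarded as a map $\mathbb R_\gamma^{[k-\chi+\sigma]}\to\mathbb R_\gamma^{[k]}$, together with the fact that $\mathfrak R_\gamma^{[k]}=\ker A^*$ by the definition of resonant polynomials, identifies the second summand as the $\langle\!\langle\cdot,\cdot\rangle\!\rangle$-orthogonal complement of $\mathfrak R_\gamma^{[k]}$ inside $\mathbb R_\gamma^{[k]}$. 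The nondegeneracy condition $\det\big(\{\langle\!\langle R_i,S_j\rangle\!\rangle\}\big)\ne0$ from Definition~\ref{def6} says precisely that no nonzero resonant polynomial is orthogonal to $\mathrm{span}\,(\mathfrak S_\gamma^{[k]})$; together with the dimension count $|\mathfrak S_\gamma^{[k]}|=\dim\mathfrak R_\gamma^{[k]}$, this yields the claimed direct sum.

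With the decomposition in hand, the induction mirrors that of Theorem~1. Assume that by some element of $\mathfrak A_s$ we have reached $H=H_0+\sum_{k=\Delta}^{m-1}R_\gamma^{[k]}+S_m$ with $R_\gamma^{[k]}\in\mathrm{span}\,(\mathfrak S_\gamma^{[k]})$ and $\mathrm{ord}_\gamma\,S_m\ge m$ (the base case $m=\Delta$ being $S_\Delta=H_1$). Extract the quasi-homogeneous component $P_m\in\mathbb R_\gamma^{[m]}$ of $S_m$ and use the decomposition to write $R_\gamma^{[m]}-P_m=\{F_m,H_0\}$ with $R_\gamma^{[m]}\in\mathrm{span}\,(\mathfrak S_\gamma^{[m]})$ and $F_m\in\mathbb R_\gamma^{[m-\chi+\sigma]}$. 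The transformation $\exp(X_{F_m})\in\mathfrak A_s$ then yields $H_0+\sum_{k=\Delta}^m R_\gamma^{[k]}+S_{m+1}$ with $\mathrm{ord}_\gamma\,S_{m+1}\ge m+1$: at generalized degree $m$ the bracket $\{F_m,H_0\}$ cancels $P_m-R_\gamma^{[m]}$ by construction, while the further terms $\{F_m,S_m\}$, $\{F_m,R_\gamma^{[k]}\}$ for $k<m$, $\frac12\{F_m,\{F_m,H_0\}\}$, and all higher iterates of $\widehat F_m$ in $\exp(\widehat F_m)(H)$ have generalized order strictly greater than $m$, thanks to $\delta\ge1$ and $m\ge\Delta>\chi$. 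Running the induction up to $m=N$ and composing the finitely many resulting elements of $\mathfrak A_s$ gives $\widetilde H$.

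The main obstacle is the linear-algebraic identification $(\mathfrak R_\gamma^{[k]})^\perp=\widehat H_\gamma^{[\chi]}\big(\mathbb R_\gamma^{[k-\chi+\sigma]}\big)$: it hinges on the explicit formulas for the adjoint written just before the resonance equation~\eref{Eq10} genuinely computing the conjugate of $\widehat H_\gamma^{[\chi]}$ with respect to $\langle\!\langle\cdot,\cdot\rangle\!\rangle$, a verification that is nontrivial especially in the contact case. Once this is granted, the remainder of the proof is a mild simplification of the proof of Theorem~1.
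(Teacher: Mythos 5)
Your proposal is correct and follows essentially the same route as the paper: the paper's own proof consists precisely of invoking the Fredholm alternative in finite dimensions together with the nondegeneracy condition of Definition~\ref{def6} (deferring details to \cite{BV}), which is exactly the direct-sum decomposition $\mathbb R_\gamma^{[k]}=\mathrm{span}\,(\mathfrak S_\gamma^{[k]})\oplus\widehat H_\gamma^{[\chi]}\big(\mathbb R_\gamma^{[k-\chi+\sigma]}\big)$ you establish, followed by the same degree-by-degree induction modelled on Theorem~1. Your order-counting for the higher iterates of $\widehat F_m$ (using $\delta\ge1$ and the canonicity of $\gamma$, so that the bracket lowers generalized degree by exactly $\sigma$) is sound, and the adjoint formulas you flag are part of the paper's setup rather than something Theorem~2 requires you to reprove.
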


\begin{proof}
The theorem follows from Definition~\ref{def6} and the Fredholm alternative 
for finite-dimensional spaces. Detailed proof in the symplectic case (in the 
contact case, the proof is exactly the same) is given in \cite{BV}. 
\end{proof}

For the symplectic case, a close result to Theorem~2 can be found in 
\cite{belitsky}. However, the normal form equations given there are very 
complicated, since the unperturbed part at each step of normalization involves 
normalized terms of the perturbation (for further information, see 
\cite{belitsky77}). The quasi-homogeneous symplectic case 
was first analyzed in \cite{BV}, where the canonicity condition on the weight 
was stated and definition of a resonant set was introduced. 

\section{Applications of generalized normal forms in thermodynamics}

\subsection{Thermal equations of state}

In thermodynamics, relations between macroscopic parameters are described by 
the equations of state. Equations that provide a relationship between the 
temperature~$T,$ the volume concentrations $n_i$ $(i=\overline{1,\,k})$ of  
the system components, where $k$ denotes the number of components, the 
pressure $P,$ and other generalized thermodynamic forces, are called 
\textit{thermal equations of state}. For example, 
\begin{eqnarray}\label{Eq11}
P=T\,\sum_{i=1}^k n_i,\\ \label{Eq12}
P=T\,\sum_{i=1}^k n_i-
\frac{\sqrt{\pi}}{3\,T^{1/2}}\,\Big(\sum_{i=1}^k n_i\,q_i^2\Big)^{3/2}
\end{eqnarray}
are, respectively, thermal equations of state of the ideal gas and the 
classical Debye-H\"uckel plasma written in ESU CGS units, where $T$ is 
considered in $\mathrm{erg}$ (see, for instance, \cite[pp~151,\,282]{landau}). 
Here, $q_i$ denotes the electric charge of a particle of $i$-th type.

There are two approaches to describing thermodynamical properties of non-ideal 
media, the rigorous one treating media as a system of nuclei and electrons 
interacting by Coulomb's law, and the \textit{semi-empirical} one that 
consists in constructing qualitative models, in which the general form of 
functional relationships is established on theoretical arguments, and the 
numerical coefficients are determined experimentally \cite[s~8]{fortov}. 
Method of generalized normal forms allows to reduce significantly the number 
of unknown coefficients in such qualitative models of non-ideal media.

Consider a thermal equation of state of the form
\begin{equation}\label{Eq13}
H(P,\,T,\,n)=0,\qquad n=(n_1,\,\ldots,\,n_k).
\end{equation} 
Using the \textit{Gibbs-Duhem equation} (see, for instance, 
\cite[p~322]{krichevsky})
\begin{equation*}
\rmd P-s\,\rmd T-\sum_{i=1}^k\,n_i\,\rmd\mu_i=0,
\end{equation*}
where $s$ denotes the entropy density, and $\mu_i$ the chemical potential of 
the $i$-th type of particles, \eref{Eq13} is seen to be a first order 
partial differential equation for $P.$ Its characteristic equations are 
determined by infinitesimal contact transformation \eref{Eq02}:
\begin{equation}\label{Eq14}
\cases{\dot P=H-\sum_i n_i\,\frac{\partial H}{\partial n_i}\\
\dot n_i=n_i\,\frac{\partial H}{\partial P}\\
\dot T=0},\quad
\cases{\dot \mu_i=-\frac{\partial H}{\partial n_i}\\
\dot s=\frac{\partial H}{\partial T}+s\,\frac{\partial H}{\partial P}},
\end{equation}
where subsystem~$(\ref{Eq14}_1)$ contains only variables that enter 
\eref{Eq13}. Here, the dot denotes differentiating with respect to some 
parameter that does not have a physical sense itself. Note that the quantities 
$T,\,n_j/\sum_i n_i,\,H/\sum_i n_i$ are integrals of \eref{Eq14}, and the 
surface given by \eref{Eq13} is integral for this system.

Suppose that $H$ can be represented in the form \eref{Eq04}--\eref{Eq05}, 
where $z_i=\mu_i,$ $z_{k+1}=T,$ $z_{k+i+1}=n_i,$ $z_{2(k+1)}=s,$ 
$z_{2(k+1)+1}=P$ $(i=\overline{1,\,k}).$ Then the above theory is applicable, 
so by Theorem~1, $H$ can be put into normal form $\widetilde H$ up to a 
desired degree by a contact transformation, with the surface $H=0$ transformed 
to $\widetilde H=0.$

By definition, variables $s$ and $\mu_i$ do not enter a thermal equation of 
state. So if we do not want them to appear in $\widetilde H,$ it is 
necessary to consider the transformations that depend only on $P,\,T,$ and 
$n_i.$ This follows from the form of the Poisson bracket \eref{Eq03}. 

\subsection{Perturbation theory for ideal gases}

Equation of state of a mixture of non-ideal gases is usually written in the 
form of a \textit{virial expansion}, also called the \textit{Mayer cluster 
expansion}:
\begin{equation}\label{Eq15}
P=T\,\sum_{i=1}^k n_i+\sum_{|m|\ge2} B_m\,n^m,\qquad m=(m_1,\,\ldots,\,m_k).
\end{equation}
$B_m$ are called \textit{virial coefficients} and characterized by  
contributions of the interactions within $|m|$-particle groups, or 
\textit{clusters}, of molecules of gas.

Suppose that $B_m$ are analytic functions of temperature near the point $T=0.$ 
Then the following theorem holds.

\begin{theorem} 
For each integer $M\ge2,$ all the virial coefficients $B_m$ with $|m|\le M$ in 
expansion \eref{Eq15} can be eliminated by a contact transformation.
\end{theorem}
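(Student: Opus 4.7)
My plan is to verify Theorem~3 by exhibiting a single generating function $F$ depending only on $(T,n)$ and such that the contact flow $\exp(X_F)$ removes the whole block $\{B_m\}_{|m|\le M}$ in one stroke, with the exponential series collapsing after a single commutator. The key simplification is that on functions of $(T,n)$ alone the contact Poisson bracket \eref{Eq03} degenerates to zero: under the labelling of Section~5.1 the canonically paired variables are $(\mu_i,n_i)$ and $(T,s)$, together with the contact variable $P$, so every surviving term of \eref{Eq03} pairs a $\partial_{n_i}$ with a $\partial_{\mu_i}$, or $\partial_s$ with $\partial_T+s\,\partial_P$, and in each such pair at least one factor is zero on a $(T,n)$-function. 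This is more restrictive than the $(P,T,n)$-dependence demanded in Section~5.1, but still sends equations of state to equations of state.

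First I would write the ideal-gas and virial pieces as $H_0=P-T\sum_i n_i$ and $H_1=-\sum_{|m|\ge2}B_m(T)\,n^m$. Substituting $\Phi=H_0$ and $\Psi=F(T,n)$ into \eref{Eq03} produces the compact formula
\begin{equation*}
\{H_0,F\}=\sum_{i=1}^k n_i\,\partial_{n_i}F-F,
\end{equation*}
which on a monomial reads $\{H_0,T^{\,j}n^m\}=(|m|-1)\,T^{\,j}n^m$. Each virial block $B_m(T)\,n^m$ with $|m|\ge2$ therefore has the explicit preimage $-B_m(T)\,n^m/(|m|-1)$ under $\widehat H_0=\{H_0,\,\cdot\,\}$.

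I would then set
\begin{equation*}
F\;=\;-\sum_{2\le|m|\le M}\frac{B_m(T)}{|m|-1}\,n^m,
\end{equation*}
an analytic function of $(T,n)$ near the origin. Linearity gives $\{F,H_0\}=\sum_{2\le|m|\le M}B_m(T)\,n^m$; the bracket-vanishing observation gives $\{F,H_1\}=0$; and since $\{F,H_0\}$ itself depends only on $(T,n)$, the same observation gives $\{F,\{F,H_0\}\}=0$. By induction $\widehat F^{\,k}(H)=0$ for all $k\ge2$, the exponential series truncates after one step, and
\begin{equation*}
\exp(\widehat F)(H)\;=\;H+\{F,H\}\;=\;H_0-\sum_{|m|>M}B_m(T)\,n^m,
\end{equation*}
which, read as an equation of state in the new variables, exhibits no virial contributions of order~$\le M$.

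The main subtlety, and the one place where I would spend extra care, is that unless the $B_m$ happen to be polynomials the generating function $F$ is only analytic in $T$, so $\exp(X_F)$ does not literally belong to the subgroup $\mathfrak A_s$ defined in the Introduction. This is not a genuine obstruction: Theorem~3 only asks for a contact transformation, and the assumed analyticity of the $B_m$ near $T=0$ makes $X_F$ generate a bona fide contact diffeomorphism on a neighbourhood of the origin. If one wishes to stay inside $\mathfrak A_s$, the same $F$ can instead be truncated into polynomials and fed into Theorem~1 with the canonical weight $\gamma_{\mu_i}=\gamma_T=1,\,\gamma_{n_i}=\gamma_s=2,\,\gamma_P=3$; under this weight $H_0$ is quasi-homogeneous of generalized degree~$3$ and each virial monomial $T^{\,j}n^m$ with $|m|\ge2$ is manifestly non-resonant (the eigenvalue $\ell+|m|-1$ of $\widehat H_0$ is non-zero because $\ell=0$ and $|m|\ge2$), so every Taylor coefficient of $B_m$ with $|m|\le M$ is killed one generalized degree at a time, and the closed-form $F$ above is the formal limit of this iteration as $N\to\infty$.
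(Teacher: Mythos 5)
Your argument is correct, and it reaches the conclusion by a more explicit route than the paper. The paper's proof is in two steps: under its weight $\gamma=(1,\ldots,1,2)$ the temperature-independent part of the second virial coefficients has the same generalized degree as $H_0,$ so it is first removed by the explicit substitution $\widetilde P=P-\sum_{i,j}b_{ij}n_in_j/2,$ $\widetilde\mu_i=\mu_i-\sum_jb_{ij}n_j;$ the remaining terms are then treated by writing the resonance equation \eref{Eq10} for $H_\gamma^{[2]}=P-T\sum_in_i$ restricted to $(P,T,n)$-dependent perturbations, observing that its solution space is spanned by $P\,T^\ell$ and $n_i\,T^\ell,$ and invoking Theorem~2. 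Your proof rests on the same core computation --- the eigenvalue of $\widehat H_0$ on $P^\ell T^jn^m$ is $\ell+|m|-1,$ which is dual to the paper's resonance equation --- but packages it as one closed-form generating function $F=-\sum_{2\le|m|\le M}B_m(T)\,n^m/(|m|-1)$ together with the (correct) observation that the bracket \eref{Eq03} annihilates any pair of functions of $(T,n)$ alone, so that $\widehat F^{\,2}(H)=0$ and the Lie series terminates exactly. This buys several things the paper leaves implicit: no separate treatment of generalized degree $2$ (your alternative canonical weight $\gamma_{n_i}=\gamma_s=2,$ $\gamma_P=3$ likewise removes the need for it inside the Theorem~1 framework, since then $\mathrm{ord}_\gamma H_1\ge4>3=\deg_\gamma H_0$); an explicit formula for the normalizing transformation instead of an inductive existence statement; and the stronger conclusion that no new resonant terms $P\,T^\ell$ or $n_i\,T^\ell$ are created, so the ideal-gas part is literally unchanged rather than merely ``in normal form.'' Your handling of the group-membership issue is also sound: the theorem asks only for a contact transformation, $X_F$ is analytic and vanishes at the origin, so $\exp(X_F)$ is a genuine local contact diffeomorphism, and the polynomial truncation you sketch recovers the paper's setting when one insists on $\mathfrak A_s.$ The only blemish is notational: the symbol $\ell$ in your final parenthesis must be read as the exponent of $P$ (zero on a virial monomial) for the eigenvalue formula to parse, which is easy to fix but worth stating.
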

\begin{proof} 
In generalized degree $2$ with weight $\gamma=(1,\,\ldots,\,1,\,2),$ where $2$ 
is the weight of $P,$ equation of state \eref{Eq15} can be written as 
\begin{equation*}
P=T\,\sum_{i=1}^k n_i+\sum_{i,j=1}^k \frac{b_{ij}\,n_i\,n_j}{2}.
\end{equation*}
Here $b$ is a constant symmetric matrix. Clearly, the contact transformation 
$\widetilde P=P-\sum_{i,j=1}^k b_{ij}\,n_i\,n_j/2,$ 
$\widetilde \mu_i=\mu_i-\sum_{j=1}^k b_{ij}\,n_j$ brings it into the form 
\eref{Eq11}. Thus, the part of the second virial coefficients that does not 
depend on temperature can always be eliminated by a contact transformation.

Let us prove the statement for terms of the perturbation of generalized degree 
greater than $2.$ For this purpose write out the resonance 
equation \eref{Eq10} for the unperturbed Hamiltonian 
$H_\gamma^{[2]}=P-T\,\sum_{i=1}^k n_i$ corresponding to \eref{Eq11}, taking 
into account that the perturbation depends only on $P,\,T,$ and $n_i:$
\begin{equation*}
P\,\frac{\partial R}{\partial P}+
\sum_{i=1}^k n_i\,\frac{\partial R}{\partial n_i}-R=0\qquad
\big(R\in\mathbb R[P,\,T,\,n_i]\big).
\end{equation*}
Monomials of the form $P\,T^l$ and $n_i\,T^l$ make up a basis of solutions 
of this equation, and hence by Theorem~2, normal form of the perturbation. In 
particular, terms of degree higher than one in $n_i$ are absent in normal form.
\end{proof}

Theorem~3 agrees with a well known fact that the media described by virial 
expansions (for example, the Van der Waals gas) permit continious transitions 
between any two states without ever separating into two different phases 
\cite[pp~298,\,299]{landau}. Apparently, for the same reasons as claimed in 
Theorem~3, the virial expansion turned out to be useless for describing 
critical phenomena, such as the gas-liquid-solid triple point 
(for further discussion, see \cite[pp~105--110]{feynman}).

\subsection{Perturbation theory for classical Debye-H\"uckel plasmas}

Equation of state \eref{Eq12} of $k$-component plasma holds under 
assumptions of electrical neutrality and smallness of the Coulomb interaction 
in comparison to the kinetic energy \cite[pp~279,\,280]{landau}:
\begin{equation}\label{Eq16}
\sum_{i=1}^k n_i\,q_i=0,\quad
\sum_{i=1}^k n_i\ll\bigg(\frac{T}{\langle q^2\rangle}\bigg)^3.
\end{equation}

The unperturbed Hamiltonian corresponding to \eref{Eq12} can be chosen as
\begin{equation}\label{Eq17}
H_0=T\,\Big(P-T\,\sum_{i=1}^k n_i\Big)^2-
\frac{\pi}{9}\,\Big(\sum_{i=1}^k n_i\,q_i^2\Big)^3.
\end{equation}

Without loss of generality, assume that $q_1=0,$ $q_2>0,$ and $q_3<0.$ Then the
change of variables $\widetilde n_1=\sum_{i=1}^k n_i,$
$\widetilde n_2=\sum_{i=1}^k n_i\,q_i,$ 
$\widetilde n_3=(\pi/9)^{1/3}\,\sum_{i=1}^k n_i\,q_i^2,$ $\widetilde n_j=n_j,$ 
where $j=\overline{4,\,k},$ is nondegenerate and can be extended to a contact 
transformation. For this purpose, choose $\widetilde \mu_i$ that satisfy 
$\widetilde n_i\,\rmd\widetilde \mu_i=n_i\,\rmd\mu_i$ $(i=\overline{1,\,k}),$ 
and leave $P,\,T,$ and $s$ unchanged. Notice that condition~$(\ref{Eq16}_1)$ 
takes the form $\widetilde n_2=0.$

Let us denote
\begin{equation}\label{Eq18} 
\eqalign{
z_1=\widetilde\mu_1,\quad z_j=\widetilde\mu_{j+1},\quad z_k=T,\cr
z_{k+1}=\widetilde n_1,\quad z_{k+j}=\widetilde n_{j+1},\quad z_{2k}=s,\quad
z_{2k+1}=P}
\qquad(j=\overline{2,\,k-1}).
\end{equation}
In new variables, the Hamiltonian \eref{Eq17} is written
\begin{equation*}
H_0=z_k\,(z_{2k+1}-z_k\,z_{k+1})^2-z_{k+2}^3.
\end{equation*}

Inequality~$(\ref{Eq16}_2),$ which takes the form $z_{k+2}\ll z_k^3$ 
according to \eref{Eq18}, implies a natural condition on weight $\gamma$ of 
the variable $z:$ $\gamma_{k+2}=3\,\gamma_k.$ It follows in turn from 
\eref{Eq12} that $\gamma_{2k+1}=4\,\gamma_k.$ Furthermore, \eref{Eq18} 
implies $\gamma_{k+j}=\gamma_{k+1}$ $(j=\overline{2,\,k-1}).$ The remaining
components of $\gamma$ are determined by the canonicity condition. Thus, in 
the case under consideration, the natural canonical weight has the form 
$\gamma=(1,\,\ldots,\,1,\,3,\,\ldots,\,3,\,4),$ and $H_0$ is a 
quasi-homogeneous polynomial in $z$ of generalized degree $9$ with weight 
$\gamma.$

\begin{theorem}
The minimal resonant set $\mathfrak S$ corresponding to a lexicographical 
order determined by the order $z_{2k+1}\succ z_k\succ z_{k+1}\succ\ldots\succ 
z_{2k-1}$ consists of the following monomials:

\begin{enumerate}
\item $z_k^{i_k}\,\cdots\,z_{2k-1}^{i_{2k-1}};$
\item $z_{2k+1}\,z_k^{i_k+1}\,z_{k+j},$ where $j=\overline{1,\,k-1},$ and 
$z_{2k+1}^3\,z_k^{i_k+1};$  
\item $z_{2k+1}^{i_{2k+1}}\,z_{k+1}^{i_{k+1}}\,\cdots\,z_{2k-1}^{i_{2k-1}},$ 
except $z_{2k+1}\,z_{k+2}^3,$
\end{enumerate}
where $i_{2k+1},\,i_k,\,i_{k+1},\,\ldots,\,i_{2k-1}\in\mathbb Z_+.$
\end{theorem}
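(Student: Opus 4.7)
\bigskip
\noindent\textbf{Proof plan.} Since $H_0$ is quasi-homogeneous with canonical weight $\gamma$, by the remark following Definition~\ref{def6} it suffices to compute $\mathrm{LM}\,(\mathfrak N)$ in the monomial basis and take its complement. I would work throughout in the restricted polynomial ring in $z_k,z_{k+1},\ldots,z_{2k-1},z_{2k+1}$, since both the perturbation and the generating polynomials $F$ from~\eref{Eq07} depend only on $P$, $T$, and the density variables. Specialising~\eref{Eq03}, and noting that $\partial_{z_i}H_0=\partial_{z_i}F=0$ for $i<k$ and that both $H_0$ and $F$ are independent of $z_{2k}$, the bracket collapses to
\begin{equation*}
\{H_0,F\}=\Omega\,\partial_{z_{2k+1}}F+2z_ku(\mathcal E-1)F,
\end{equation*}
where $u=z_{2k+1}-z_kz_{k+1}$, $\Omega=z_kz_{2k+1}^2-z_k^3z_{k+1}^2+2z_{k+2}^3$, and $\mathcal E=\sum_{i=1}^{k-1}z_{k+i}\partial_{z_{k+i}}$ is the Euler operator in the densities.

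Applied to a monomial $F=z_{2k+1}^\alpha z_k^\beta m$ with $m=z_{k+1}^{\gamma_1}\cdots z_{2k-1}^{\gamma_{k-1}}$, the bracket produces four monomial terms whose lex-leading one is $z_{2k+1}^{\alpha+1}z_k^{\beta+1}m$ with coefficient $\alpha+2|\gamma|-2$. A case analysis on $(\alpha,|\gamma|)$ yields: (a)~generically $\alpha+2|\gamma|\ne2$, with leading monomial as above; (b)~if $(\alpha,|\gamma|)=(2,0)$, the first term vanishes and the fourth dominates with leading monomial $z_{2k+1}^2z_k^{\beta+2}z_{k+1}$, already the generic output of $F'=z_{2k+1}z_k^{\beta+1}z_{k+1}$; (c)~if $(\alpha,|\gamma|)=(0,1)$, all four coefficients vanish and $F=z_k^\beta z_{k+j}\in\ker\widehat H_0$. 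The set of leading monomials contributed by single monomials is thus exactly $\{z_{2k+1}^Az_k^Bm':A\ge1,\,B\ge1,\,A+2|C|\ne3\}$, whose complement in the monomial basis is precisely (i)$\cup$(ii)$\cup$(iii) together with one extra candidate, $z_{2k+1}z_{k+2}^3$.

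It remains to verify that $z_{2k+1}z_{k+2}^3$ does lie in $\mathrm{LM}(\mathfrak N)$ and that no other extras arise. For the first, observe that the case-(b) generator $F^{(b)}=z_{2k+1}^2$ and the case-(a) generator $F^{(a)}=z_{2k+1}z_kz_{k+1}$ produce the same leading monomial; combining with $F^{(a')}=z_k^2z_{k+1}^2$ yields the identity
\begin{equation*}
2\{H_0,F^{(a)}\}-\{H_0,F^{(b)}\}-\{H_0,F^{(a')}\}=4z_kz_{k+1}z_{k+2}^3-4z_{2k+1}z_{k+2}^3,
\end{equation*}
whose leading term is $-4z_{2k+1}z_{k+2}^3$. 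For the second, I would invoke the dimension identity $\dim\mathfrak N^{[m]}=\dim\mathbb R_\gamma^{[m-5]}-\dim\ker\widehat H_0\big|_{\mathbb R_\gamma^{[m-5]}}$ together with the kernel description $\ker\widehat H_0=\mathrm{span}\{z_k^\beta z_{k+j}\}\oplus\mathrm{span}\{z_k^\ell H_0\}$, where the second summand comes from the polynomial identity $(\mathcal E-1)H_0=-\Omega$ (which gives $\{H_0,z_k^\ell H_0\}=0$ for every $\ell\ge0$) and contributes only from degree $\chi=9$ upward. A degree-by-degree comparison then shows that $\dim\mathfrak N^{[m]}$ matches the number of generic case-(a) leading monomials for every $m\ne\chi+\sigma=13$, while at $m=13$ the $H_0$-family has not yet contributed to the kernel and the case-(b) coincidence forces exactly one extra leading monomial, namely $z_{2k+1}z_{k+2}^3$.

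The main obstacle is justifying the kernel description in the restricted ring. I would handle this by expanding $F=\sum_{j\ge0}F_jz_{2k+1}^j$, collecting the coefficient of each $z_{2k+1}^j$ in $\{H_0,F\}$, and solving the resulting coupled linear equations for the $F_j$ recursively at each density-degree; the top equation forces $F_j=0$ for large $j$, and successive back-substitution identifies $\ker\widehat H_0$ with the stated span, closing the dimension count and the identification of the minimal resonant set with (i)$\cup$(ii)$\cup$(iii).
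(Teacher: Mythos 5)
Your computational core coincides with the paper's. The operator identity $\{H_0,F\}=\Omega\,\partial_{z_{2k+1}}F+2\,z_k\,u\,(\mathcal E-1)F$ is exactly the paper's formula \eref{Eq19} in closed form (I checked it against \eref{Eq03}; it is correct), the leading coefficient $\alpha+2|\gamma|-2$ and the two degenerate cases $(\alpha,|\gamma|)=(2,0),(0,1)$ match the paper's case analysis, and your combination $2F^{(a)}-F^{(b)}-F^{(a')}=-(z_{2k+1}-z_kz_{k+1})^2$ is precisely the paper's generator $z_k^{i_k}(z_{2k+1}-z_kz_{k+1})^2$ at $i_k=0$, yielding the exceptional leading monomial $z_{2k+1}z_{k+2}^3$ with the correct value $\{H_0,u^2\}=4(z_{2k+1}-z_kz_{k+1})\,z_{k+2}^3$. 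Where you genuinely diverge is the completeness step ("no other extras"). The paper finishes the Gaussian elimination directly: for $i_k\ge1$ the element $z_k^{i_k}u^2$ reduces against the generic generator $z_k^{i_k-1}z_{k+2}^3$ to $z_k^{i_k-1}H_0$, whose bracket vanishes — which is exactly your identity $(\mathcal E-1)H_0=-\Omega$ — so no further leading monomials appear and the list of triangularized generators is exhausted. You instead route completeness through a description of $\ker\widehat H_0$ on the restricted ring plus a dimension count. That route is valid and arguably more systematic, but as written it rests on the unproven claim $\ker\widehat H_0=\mathrm{span}\{z_k^\beta z_{k+j}\}\oplus\mathrm{span}\{z_k^\ell H_0\}$, for which you only sketch a recursion in powers of $z_{2k+1}$.

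That sketched step is the one real gap, and it is closable more cheaply than your plan suggests: you have already exhibited, in each generalized degree $D=m-5$, a family of elements of $\mathfrak N^{[m]}$ with distinct leading monomials (the generic ones plus, at $D=8$, the $u^2$-generator) and a family of linearly independent kernel elements ($z_k^\beta z_{k+j}$ and $z_k^\ell H_0$, which have distinct leading monomials among themselves). The two counts sum to the number of monomials of degree $D$ in the restricted ring, so rank--nullity forces both inclusions to be equalities simultaneously; no separate recursive determination of the kernel is needed. Two further small points: your dimension identity must indeed be taken in the restricted ring $\mathbb R[z_k,\ldots,z_{2k-1},z_{2k+1}]$ (you flag this, and it matters — in the full ring the kernel is much larger and the count fails), and in case (b) the dominating term is the one proportional to $-2(|\gamma|-1)\,z_{2k+1}z_k^2z_{k+1}$, not the $z_{k+2}^3$-term; your stated leading monomial $z_{2k+1}^2z_k^{\beta+2}z_{k+1}$ is nevertheless the right one, so this is only a labelling slip.
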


\begin{proof}
For all $i_{2k+1},\,i_k,\,i_{k+1},\,\ldots,\,i_{2k-1}\in\mathbb Z_+,$ we have
\begin{eqnarray}\fl
\{H_0,\,z_{2k+1}^{i_{2k+1}}\,z_k^{i_k}\,\cdots\,z_{2k-1}^{i_{2k-1}}\}=
z_{2k+1}^{i_{2k+1}-1}\,z_k^{i_k}\,\cdots\,z_{2k-1}^{i_{2k-1}}\times\nonumber\\
\times\Big[2\,\Big(\frac{i_{2k+1}}{2}+i_{k+1}+\ldots+i_{2k-1}-1\Big)\,
z_{2k+1}^2\,z_k-\label{Eq19}\\\quad-
2\,(i_{k+1}+\ldots+i_{2k-1}-1)\,z_{2k+1}\,z_k^2\,z_{k+1}-
i_{2k+1}\,z_k^3\,z_{k+1}^2+
2\,i_{2k+1}\,z_{k+2}^3\Big].\nonumber  
\end{eqnarray}
This implies that all monomials divisible by $z_{2k+1}\,z_k,$ except 
$z_{2k+1}\,z_k^{i_k+1}\,z_{k+j}$ for $j=\overline{1,\,k-1}$ and 
$z_{2k+1}^3\,z_k^{i_k+1},$ do not belong to $\mathfrak S.$ 

Furthermore, in case where $i_{2k+1}+2\,i_{k+1}+\ldots+2\,i_{2k-1}=2,$ the 
right-hand side of \eref{Eq19} is identically zero if $i_{2k+1}=0.$ Reducing 
one by one the leading monomials in the Poisson bracket \eref{Eq19} in the 
case where $i_{2k+1}=2$ and $i_{k+j}=0$ $(j=\overline{1,\,k-1}),$ we come to 
the following equality:
\begin{equation*}
\{H_0,\,z_k^{i_k}\,(z_{2k+1}-z_k\,z_{k+1})^2\}=
4\,z_k^{i_k}\,(z_{2k+1}\,z_{k+2}^3-\,z_k\,z_{k+1}\,z_{k+2}^3).
\end{equation*}
This implies that the monomial $z_{2k+1}\,z_{k+2}^3$ does not belong to 
$\mathfrak S.$ For $i_k\ge1,$ further reduction of the leading monomial gives 
identical zero.

Collecting all the results together, we come to the statement of the theorem.
\end{proof}

From now on, we consider, for simplicity, the plasma consisting of the 
hydrogen atoms, protons and electrons. We denote the corresponding parameteres 
by the subindices $\mathrm a,\,\mathrm p,$ and $\rme.$ In this case, 
$q_{\mathrm a}=0,$ $q_{\mathrm p}=-q_\rme=e,$ where $e$ is the elementary 
charge, $n_\rme=n_{\mathrm p},$ and
\begin{equation}
\eqalign{
z_1=\mu_{\mathrm a},\quad z_2=\frac{3^{2/3}}{2\,\pi^{1/3}\,e^2}\,
(\mu_{\mathrm p}+\mu_\rme-2\,\mu_{\mathrm a}),\quad z_3=T,\nonumber\\ 
z_4=n_{\mathrm a}+2\,n_{\mathrm p},\quad 
z_5=\frac{2\,\pi^{1/3}\,e^2}{3^{2/3}}\,n_{\mathrm p},\quad z_6=s,\quad 
z_7=P.}\label{Eq20}
\end{equation}

Theorem~4 gives the following generalized normal form of 
the equation of state:
\begin{eqnarray}\fl
z_3\,(z_7-z_3\,z_4)^2-z_5^3+
\sum_{i+3j+3k\ge10} h^{[0,i,j,k]}\,z_3^i\,z_4^j\,z_5^k+
\sum_{4i+3j+3k\ge10} h^{[i,0,j,k]}\,z_7^i\,z_4^j\,z_5^k+\nonumber\\+
\sum_{i=1}^\infty [h^{[3,i,0,0]}\,z_7^3+(h^{[1,i+2,1,0]}\,z_4+
h^{[1,i+2,0,1]}\,z_5)\,z_7\,z_3^2]\,z_3^i=0,\label{Eq21}
\end{eqnarray}
where $h^{[1,0,0,3]}=0.$

Let us consider the case, where the perturbation consists of elements of the 
minimal resonant set $\mathfrak S$ of generalized degree $10.$ According to 
\eref{Eq21} and the physical restriction $z_7/z_3\rightarrow z_4$ as either
$z_3\rightarrow\infty$ or $z_5\rightarrow0,$ the form of such perturbation is 
determined uniquely up to three constant coefficients
\begin{equation}\label{Eq22}
H_\gamma^{[10]}=-z_5\,(a\,z_4+b\,z_5+c\,z_3^3)\,(z_7-z_3\,z_4).
\end{equation}
Note that if we considered an arbitrary perturbation (not in normal form), we 
should have additionally taken into account the terms $z_7^2\,z_3^2$ and 
$z_7\,z_3^6.$ 

\bigskip
In order to find out the physical meaning of $a,\,b,$ and $c,$ first look at 
the behaviour of the Debye-H\"uckel plasma $(a,\,b,\,c=0)$ ignoring 
$(\ref{Eq16}_2).$ By integrating \eref{Eq14}, we come to the following 
expressions for $z_1$ and $z_2:$ 
\begin{equation*}
z_1=T\,\ln z_4+C_1,\quad z_2=
-3^{2/3}\,\pi^{1/6}\,e\,\sqrt{\frac{2\,\alpha\,z_4}{(1+\alpha)\,T}}+C_2,
\end{equation*}
where $\alpha=n_\mathrm{p}/(n_\mathrm{a}+n_\mathrm{p})$ is called the 
\textit{ionization coefficient}. Recall that $\alpha$ is an integral of 
\eref{Eq14}. Since at low values of the concentration $z_4,$ plasma turns to a 
mixture of ideal gases, $C_1$ and $C_2$ can be determined by replacing the 
chemical potentials of components in \eref{Eq20} with the chemical potentials 
of ideal gases that have the form 
\begin{equation*}
\mu_\mathrm{id}=T\,
\ln\Big[\frac{n}{Z}\,\Big(\frac{2\,\pi\,\hbar^2}{m\,T}\Big)^{3/2}\Big],
\end{equation*}
where $n$ denotes the concentration, $Z$ the partition function of a particle, 
and $m$ the particle mass \cite[p~163]{landau}. For electrons and protons 
$Z=2,$ and for the hydrogen atoms, for simplicity, we take 
$Z=4\,\exp(\mathrm{Ry}/T),$ where 
$\mathrm{Ry}\approx2.18\cdot10^{-11}\ \mathrm{erg}$ is the Rydberg constant. 
This partition function ignores all the energy levels of the hydrogen atom but 
the lowest one. For more precision the Planck-Larkin partition function should 
be used.

Solve the equation of chemical equilibrium that takes the form
\begin{equation}\label{Eq23}
z_1+3^{-2/3}\,2\,\pi^{1/3}\,e^2\,z_2=0
\end{equation} 
to obtain
\begin{equation}\label{Eq24}
\alpha=\Big[1+n_\mathrm{p}\,\Big(\frac{2\,\pi\,\hbar^2\,m_\mathrm{a}}{m_\rme\,
m_\mathrm{p}\,T}\Big)^{3/2}\,\exp\Big(\frac{\mathrm{Ry}}{T}-
\frac{e^3\,\sqrt{8\,\pi\,n_\mathrm{p}}}{T^{3/2}}\Big)\Big]^{-1},
\end{equation}
which implies the effect of the ionization potential decreasing as 
$n_\mathrm{p}$ increases. Note that this expression is equivalent to the 
classical Saha formula with the partition function of the hydrogen atom 
replaced by 
$Z=4\,\exp(\mathrm{Ry}/T-e^3\,\sqrt{8\,\pi\,n_\mathrm{p}}/T^{3/2}).$ Such 
approach to perturbation theory of plasmas consisting in perturbing the 
hydrogen atom partition function is usually called a \textit{chemical model} 
and used, for instance, in \cite{khomkin} where the Planck-Larkin and the 
nearest neighbour partition functions are used in this context (for further 
information on chemical model, see \cite[ss 5.1,\,5.2]{son}).

Denote $n_0=n_\mathrm{a}+n_\mathrm{p}$ and rewrite \eref{Eq12} as follows
\begin{equation}\label{Eq25}
P=T\,n_0\,(1+\alpha)-\frac{\sqrt{8\,\pi}\,e^3}{3\,T^{1/2}}\,
(\alpha\,n_0)^{3/2}.
\end{equation}
Then \eref{Eq24} and \eref{Eq25} together with $n_\mathrm{p}=\alpha\,n_0$ 
give a complete thermodynamical description of Debye-H\"uckel plasma.

According to \eref{Eq24}, in the low density region where 
$(\ref{Eq16}_2)$ is satisfied, the Saha approximation is valid. As the density 
$n_0$ increases the ionization coefficient stabilizes at first, and then after 
$n_0$ exceeds the value $\approx8\cdot10^{21}\ \mathrm{particles/cm^3},$ 
rapidly tends to $1$ (see \fref{fig1a}). At temperatures less than 
$\approx23\,000\ \mathrm K$ the curve $P(n_0)$ has two branches where 
$(\partial P/\partial n_0)_T>0$ (see \fref{fig1b}) which corresponds to 
splitting of the plasma into two stable phases. The thin one is weakly ionized 
and called the \textit{dielectric gas phase} and the dense is strongly ionized 
and called the \textit{metal gas phase}. This phenomenon is called the 
\textit{plasma phase transition}. For temperatures greater than 
$\approx23\,000\ \mathrm K$ metal gas phase becomes unstable since 
$(\partial P/\partial n_0)_T$ is negative on the corresponding branch. 
An overview of relevant theoretical results can be found in \cite[s 6.4]{son} 
and references therein. It should be noted that plasma phase transition has 
not yet been detected experimentally. For this reason it is often called 
hypothetical in literature.

As can be seen from \fref{fig1b}, the density $n_0$ is bounded above. Beyond 
this bound the pressure becomes negative, which is meaningless. Apparently, at 
higher densities a thermodynamical description of plasmas is impossible due to 
absence of quasi-static states which are the only considered in thermodynamics.

\begin{figure}
\subfloat[]{\fbox{\includegraphics[trim=3 0 3 0, scale=0.9]{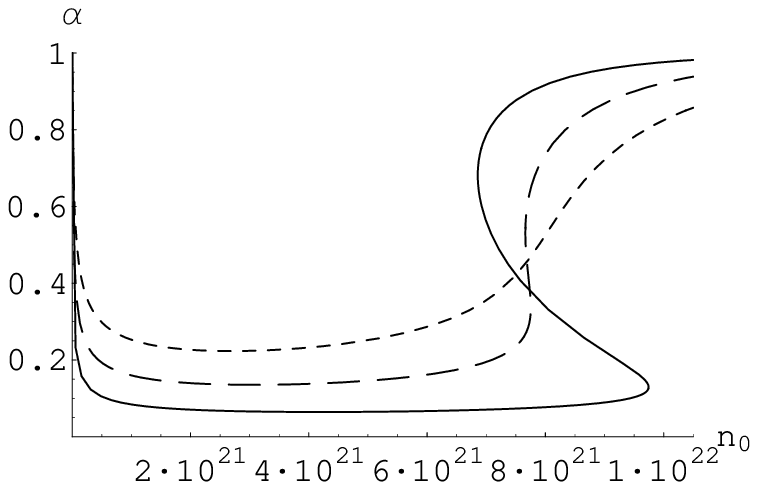}}
\label{fig1a}}\hfill
\subfloat[]{\fbox{\includegraphics[trim=3 0 3 0, scale=0.9]{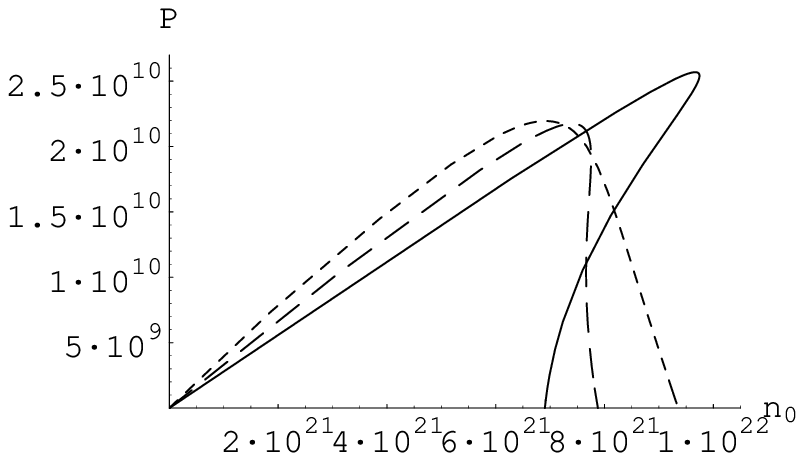}}
\label{fig1b}}
\caption{$\alpha(n_0)$ and $P(n_0)$ in the Debye-H\"uckel model at 
$T=20\,000\ \mathrm K$ (\full), $T=23\,000\ \mathrm K$ (\broken), and 
$T=26\,000\ \mathrm K$ (\dashed).}
\label{fig1}
\end{figure}

\bigskip
System \eref{Eq14} for perturbed Debye-H\"uckel plasmas with the perturbation 
\eref{Eq22} can be integrated explicitly as well. The results for small 
perturbations are given in \fref{fig2}. 
\begin{figure}[t]
\subfloat[]{\fbox{\includegraphics[trim=3 0 3 0, scale=0.9]{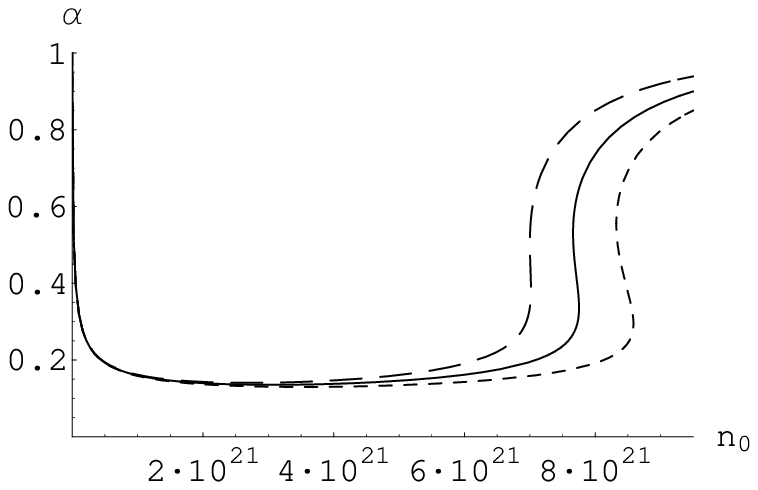}}
\label{fig2a}}\hfill
\subfloat[]{\fbox{\includegraphics[trim=3 0 3 0, scale=0.9]{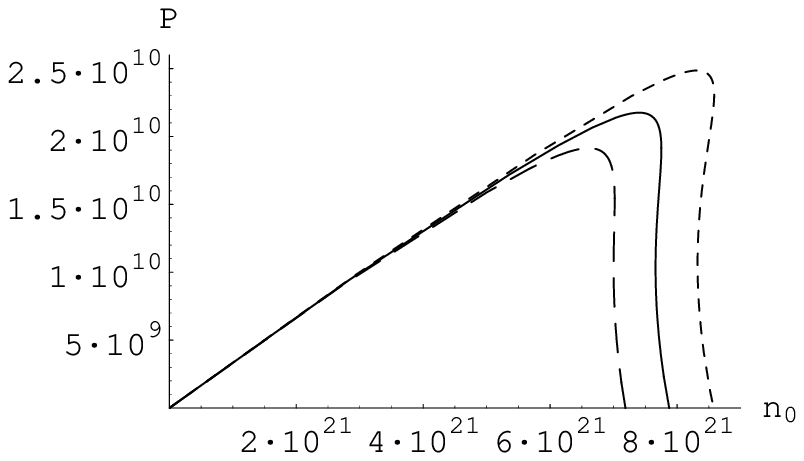}}
\label{fig2b}}\\
\subfloat[]{\fbox{\includegraphics[trim=3 0 3 0, scale=0.9]{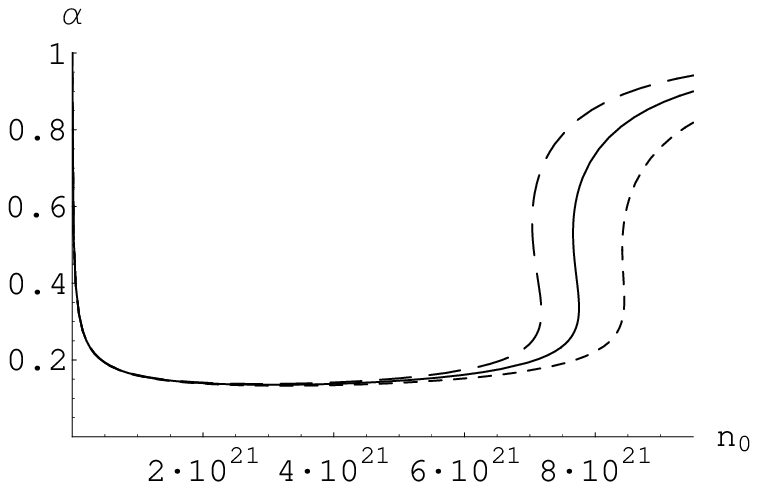}}
\label{fig2c}}\hfill
\subfloat[]{\fbox{\includegraphics[trim=3 0 3 0, scale=0.9]{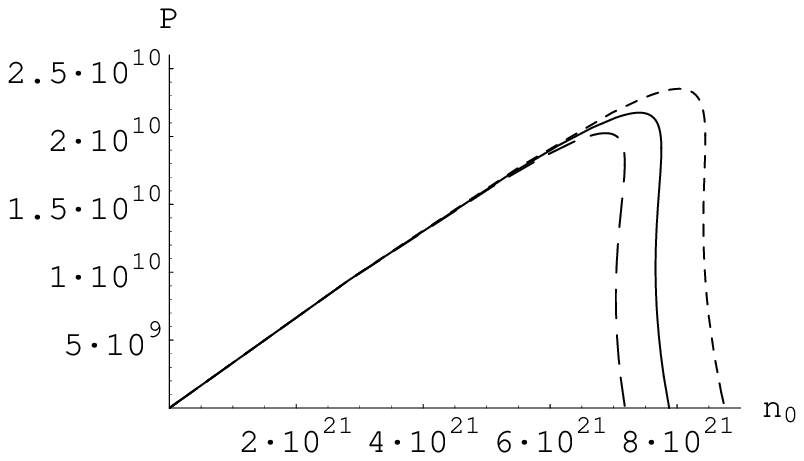}}
\label{fig2d}}\\
\subfloat[]{\fbox{\includegraphics[trim=3 0 3 0, scale=0.9]{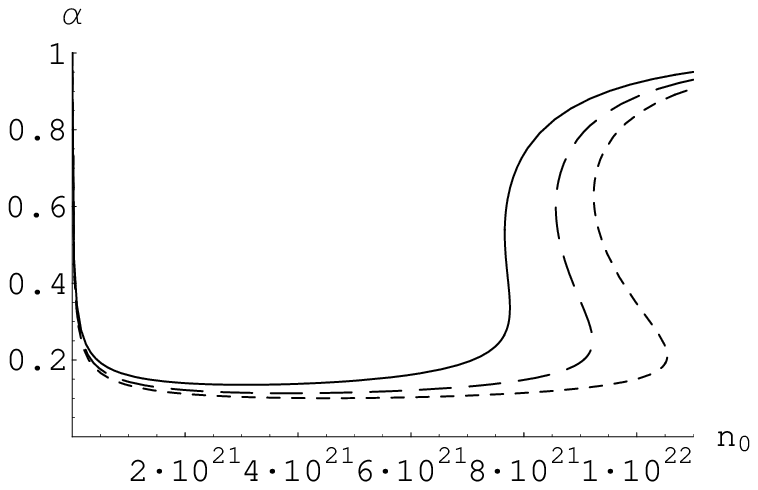}}
\label{fig2e}}\hfill
\subfloat[]{\fbox{\includegraphics[trim=3 0 3 0, scale=0.9]{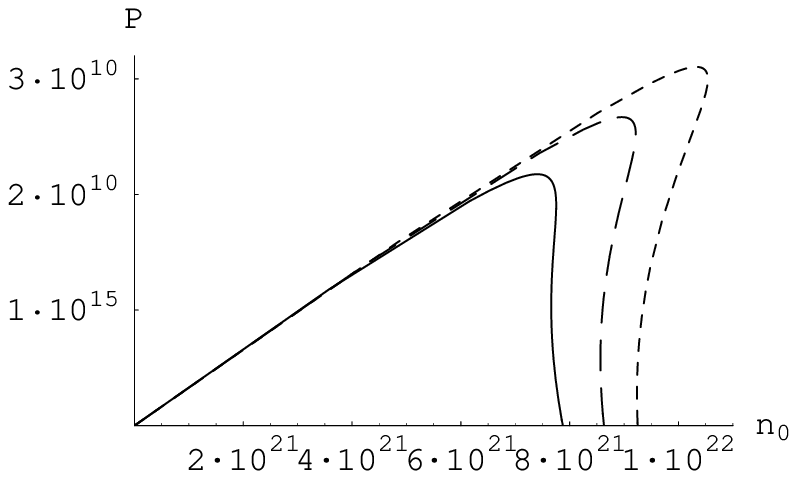}}
\label{fig2f}}
\caption{$\alpha(n_0)$ and $P(n_0)$ in perturbed Debye-H\"uckel plasmas at 
$T=23\,000\ \mathrm K$ for\\
(a)--(b): $a=0$ (\full), $a\,T/e^4\approx-0.04$ (\broken), and 
$a\,T/e^4\approx0.04$ (\dashed), $b,\,c=0;$\\
(c)--(d): $b=0$ (\full), $b\,T/e^2\approx-0.06.$ (\broken), and 
$b\,T/e^2\approx0.06.$ (\dashed), $a,\,c=0;$\\
(e)--(f): $c=0$ (\full), $c\,e^2\,T\approx0.035$ (\broken), and 
$c\,e^2\,T\approx0.07.$ (\dashed), $a,\,b=0.$}
\label{fig2}
\end{figure}
The coefficients $a$ and $b$ affect the pressure and the concentrations of 
phases at the point of phase transition without having a significant effect on 
stability of the dense phase (see figures \ref{fig2a}--\ref{fig2d}). The 
coefficient $a$ has a greater impact on properties of the thin phase while $b$ 
on the properties of the dense phase. At the same time the critical 
temperature increases with $c,$ which affects stability of the dense phase, as 
shown on figures~\ref{fig2e},\,\ref{fig2f}.

Our conclusions on influence of the coefficients $a,\,b$ and $c$ are based on 
the properties of graphs of $\alpha(n_0)$ and $P(n_0)$ at fixed temperatures.
Let us show that they are invariant under contact transformations. There are 
four monomials of generalized degree $5,$ namely, $z_7\,z_3,$ $z_3^5,$ 
$z_3^2\,z_4$ and $z_3^2\,z_5,$ but only two of them give non-zero Poisson 
brackets with $H_0$ of generalized degree $10,$ 
\begin{equation*}
\{z_3\,z_7,\,H_0\}=-3\,z_3\,z_5^3+2\,z_3^2\,(z_7-z_3\,z_4)^2,\quad
\{z_3^5,\,H_0\}=2\,z_3^6\,(z_7-z_3\,z_4).
\end{equation*}
The corresponding 1-parameter groups of contact transformations have the form
\begin{eqnarray*}
&[z_4\rightarrow\rme^{\lambda\,z_3}\,z_4,\quad
z_5\rightarrow\rme^{\lambda\,z_3}\,z_5,\quad
z_6\rightarrow\rme^{\lambda\,z_3}\,(z_6+\lambda\,z_7),\quad
z_7\rightarrow\rme^{\lambda\,z_3}\,z_7],\\
&[z_6\rightarrow z_6+5\,\lambda\,z_3^4,\quad
z_7\rightarrow z_7+\lambda\,z_3^5],
\end{eqnarray*}
where the omitted variables are left unchanged, and $\lambda$ is a parameter. 
Both transformations preserve the equation of chemical equilibrium 
\eref{Eq23}, since 
they do not change the chemical potentials. The first transformation 
represents the uniform scaling of pressure and concentrations by the scale 
factor $\rme^{\lambda\,z_3},$ so it does not affect our considerations. The 
second transformation shifts pressure by the value $\lambda\,z_3^5,$ which 
leads to violation of equality of $P$ to zero at zero density and hence has no 
physical sence, so the term $z_3^6\,(z_7-z_3\,z_4)$ is restricted in any 
perturbation of the Debye-H\"uckel model. 

\section{Conclusion}

As can be seen from the above examples, the introduced method of generalized 
normal forms is a flexible instrument for studying the critical phenomena in 
various physical systems. It can be used whenever there is a natural 
symplectic or contact form. To our knowledge, thermodynamic equations of state 
have never been considered from this point of view. In the context of 
infinitesimal contact transformations and the corresponding systems of ODEs, 
normal forms were previously studied only in the case of quasi-homogeneous 
unperturbed part $H_0$ of generalized degree $2$ with weight 
$\gamma=(1,\,\ldots,\,1,\,2)$ with generic coefficients 
(see \cite[p~37]{arnold} and references therein).

In both examples the unperturbed Hamiltonian $H_0$ was a quasi-homogeneous 
polynomial with a canonical weight, and a generalized normal form was obtained 
rather easily. However, it is not so for all interesting systems. For example, 
the Hamiltonian
\begin{equation*}
H_0=\frac{m\,p_1^2+\lambda_1\,q_1^{2m}}{2\,m}-
\frac{n\,p_2^2+\lambda_2\,q_2^{2n}}{2\,n},
\end{equation*}
where $\lambda_1,\,\lambda_2>0,$ $m,\,n\in\mathbb N,$ $n>m>1,$ is not 
quasi-homogeneous for any canonical weight. Such a Hamiltonian arises in 
studying the interaction between two nonlinear oscillators at the level of 
perturbation. The problem of stability of the equilibrium for a system with 
such an unperturbed part was partially solved in \cite{bibikov} by methods of 
KAM theory: for sufficiently high orders of the perturbation, the origin is 
Lyapunov stable. At low orders it is natural to deal with a normal form 
instead of an arbitrary perturbation, since it contains a significantly 
smaller number of terms. In the general case, explicit formulas for minimal 
resonant sets turn out to be very complicated. Nevertheless, Lemma~1 gives an 
algorithm, which can be used to obtain the leading terms of the 
Poisson brackets, and hence a minimal resonant set, for given $m,\,n,\,N,$ and 
linear order on monomials. For example, the minimal resonant set for $m=2,$ 
$n=3,$ $N=11,$ and a lexicographical order determined by the order 
$p_1\succ q_1\succ p_2\succ q_2$ consists of monomials 
$p_2^{i_2}\,q_1^{j_1}\,q_2^{j_2},$ where $7\le j_1+i_2+j_2\le11$ and 
$j_1\ge i_2,$ except the following: 
$q_1^3\,q_2^6,$ $q_1^3\,q_2^7,$ $q_1^3\,q_2^8,$ $p_2\,q_1^3\,q_2^5,$ 
$p_2\,q_1^3\,q_2^6,$ $p_2\,q_1^3\,q_2^7,$ $p_2^2\,q_1^3\,q_2^4,$ 
$p_2^2\,q_1^3\,q_2^5,$ $p_2^2\,q_1^3\,q_2^6,$ $p_2^3\,q_1^3\,q_2^3,$ 
$p_2^3\,q_1^3\,q_2^4,$ $p_2^3\,q_1^3\,q_2^5,$ $p_2\,q_1^4\,q_2^4,$ 
$p_2\,q_1^4\,q_2^5,$ $p_2\,q_1^4\,q_2^6,$ $p_2^2\,q_1^4\,q_2^3,$ 
$p_2^2\,q_1^4\,q_2^4,$ $p_2^2\,q_1^4\,q_2^5,$ $p_2^3\,q_1^4\,q_2^2,$ 
$p_2^3\,q_1^4\,q_2^3,$ $p_2^3\,q_1^4\,q_2^4,$ $p_2^4\,q_1^4\,q_2,$ 
$p_2^4\,q_1^4\,q_2^2,$ $p_2^4\,q_1^4\,q_2^3,$ $p_2^2\,q_1^5\,q_2^3,$ 
$p_2^2\,q_1^5\,q_2^4,$ $p_2^3\,q_1^5\,q_2^2,$ $p_2^3\,q_1^5\,q_2^3,$ 
$p_2^4\,q_1^5\,q_2,$ $p_2^4\,q_1^5\,q_2^2,$ $p_2^5\,q_1^5,$ 
$p_2^5\,q_1^5\,q_2,$ $p_2^3\,q_1^6\,q_2,$ $p_2^3\,q_1^6\,q_2^2,$ 
$p_2^4\,q_1^6,$ $p_2^4\,q_1^6\,q_2,$ $p_2^5\,q_1^6,$ $q_1^7\,q_2^4,$ 
$p_2\,q_1^7\,q_2^3,$ $p_2^2\,q_1^7\,q_2^2,$ $p_2^3\,q_1^7\,q_2,$ 
$p_2^4\,q_1^7,$ $p_2\,q_1^8\,q_2^2,$ $p_2^2\,q_1^8\,q_2,$ $p_2^3\,q_1^8,$ 
$p_2^2\,q_1^9.$ This set consists of $134$ monomials, while in an arbitrary 
perturbation there are $1155$ terms at these degrees.  

\ack

This research is supported by the Chebyshev Laboratory (Department of
Mathematics and Mechanics, Saint-Petersburg State University) under RF 
Government grant~11.G34.31.0026. The author is grateful to 
Dr. S.G.~Kryzhevich, Dr. V.V.~Basov, and Prof. Yu.N.~Bibikov (Department of 
Differential Equations, Mathematics and Mechanics Faculty, Saint-Petersburg 
State University) for valuable comments.

\section*{References}

\end{document}